%
% Version August 10th, 2015.
% Revised version including correction w.r.t. published version
% (generalization of Bresinsky's result)
%
\documentclass[12pt]{amsart}

\oddsidemargin=0.5cm
\evensidemargin=0.5cm
\textwidth=16cm
\textheight=21cm

\def\adj{\operatorname{adj}}

\usepackage{color}
\definecolor{red}{rgb}{1.00,0.00,0.00}

\newtheorem{theorem}{Theorem}
\newtheorem{lemma}[theorem]{Lemma}
\newtheorem{corollary}[theorem]{Corollary}

\newtheorem{example}[theorem]{Example}
\newtheorem{remark}[theorem]{Remark}
\newtheorem{definition}[theorem]{Definition}

\title{A note on Gorenstein monomial curves}

\author{Philippe Gimenez}
 \address{Instituto de Matem\'aticas de la Universidad de Valladolid (IMUVA)\\
 Departamento de \'Algebra, An\'alisis Matem\'atico, Geometr\'{\i}a y Topolog\'{\i}a\\
 Facultad de Ciencias, Universidad de Valladolid, 47011 Valladolid, Spain.}
 \email{pgimenez@agt.uva.es}
 \thanks{The first author was partially supported by {\it Ministerio de Ciencia e Innovaci\'on} (Spain),
MTM2010-20279-C02-02.}
\thanks{The second author acknowldeges with pleasure the support and hospitality of University of Valladolid and the University of Missouri Research Council for their support. }
\author{Hema Srinivasan}
 \address{Mathematics Department, University of
Missouri, Columbia, MO 65211, USA.}
 \email{SrinivasanH@math.missouri.edu}

\begin{document}
\maketitle

\begin{abstract}
Let $k$ be an arbitrary field.
In this note, we show that if a sequence of relatively prime positive integers ${\bf a}=(a_1,a_2,a_3,a_4)$ defines a Gorenstein non complete intersection monomial curve ${\mathcal C}({\bf a})$ in
${\mathbb A}_k^4$, then there exist two vectors ${\bf u}$ and ${\bf v}$ such that
${\mathcal C}({\bf a}+t{\bf u})$ and ${\mathcal C}({\bf a}+t{\bf v})$ are also Gorenstein non complete intersection affine monomial curves
for almost all $t\geq 0$.
\\
{\sc Keywords:} affine monomial curve, numerical semigroup, Gorenstein curve.
\\
{\sc Mathematical subject classification:} 13C40, 14H45, 13D02, 20M25.
\end{abstract}

\bigskip

Let ${\bf a} = (a_1, \ldots a_n)$ be a sequence of positive integers and $k$ be an arbitrary field.
If $\phi:  k[x_1, \dots, x_n ]\to k[t]$ is the ring homomorphism defined by
$\phi(x_i) = t^{a_i}$, then $I({\bf a}):= \ker \phi$ is a prime ideal of height $n-1$ in $R:=k[x_1, \ldots, x_n]$
which is a weighted homogeneous binomial ideal with the weighting $\deg x_i:= a_i$ on $R$. It is the
defining ideal of the affine monomial curve ${\mathcal C}({\bf a})\subset {\mathbb A}_k^n$ parametrically defined by ${\bf a}$
whose coordinate ring is $S({\bf a }):= {\rm Im }\phi = k[t^{a_1}, \ldots, t^{a_n}]\simeq R/I({\bf a})$. As $S({\bf a})$ is isomorphic to $S(d{\bf a})$ for all integer $d\geq 1$,
we will assume without loss of generality that $a_1, \ldots, a_n$ are relatively prime. Observe that $S({\bf a})$ is also the semigroup ring of
the numerical semigroup $\langle a_1, \ldots a_n\rangle\subset{\mathbb N}$ generated by $a_1, \ldots ,a_n$.

\medskip

As observed in \cite{De} where Delorme characterizes sequences ${\bf a}$ such that $S({\bf a })$ is a complete intersection,
this fact does not depend on the field $k$ by \cite[Corollary~1.13]{He}.
On the other hand, it is well-known that $S({\bf a })$ is Gorenstein if and only if the numerical semigroup $\langle a_1, \ldots a_n\rangle\subset{\mathbb N}$
is symmetric, which does not depend either on the field $k$. We will thus
say that ${\bf a}$ is a complete intersection (respectively Gorenstein) if the semigroup ring $S({\bf a})$ is a complete intersection (respectively Gorenstein).
In \cite{JS}, it is shown that if ${\bf a}$ is a complete intersection
with $a_1>>0$, then ${\bf a} +t(a_n-a_1)(1,\dots ,1)$ is also a complete intersection for all $t$. In this note, we will use the criterion for Gorenstein
monomial curves in ${\mathbb A}_k^4$ due to Bresinsky in \cite{Br} to construct a class of Gorenstein
monomial curves in ${\mathbb A}_k^4$.

\medskip

First observe that for each $i$, $1\le i\le n$, there exists a multiple of $a_i$ that belongs to the numerical semigroup generated by the rest of the elements in the sequence and denote by $r_i>0$ the smallest positive integer such that $r_ia_i \in \langle a_1,\ldots,a_{i-1},a_{i+1},\ldots,a_n\rangle$. So we have that
\begin{equation}\label{principalrelations}
\forall i,\ 1\leq i\leq n,\ r_ia_i = \sum _{j\neq i} r_{ij}a_j,\ r_{ij} \ge 0,\ r_i >0\,.
\end{equation}

\begin{definition}{\rm
The $n\times n$ matrix $D(\bf {a}):=(r_{ij})$ where $r_{ii}:= -r_i$ is called a
{\it principal matrix} associated to $\bf {a}$.
}\end{definition}

\begin{lemma}
$D(\bf {a})$ has rank $n-1$.
\end{lemma}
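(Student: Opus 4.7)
The easy direction $\operatorname{rank} D(\mathbf{a}) \leq n-1$ is immediate. Rewriting (\ref{principalrelations}) as $\sum_j r_{ij} a_j = 0$ (using $r_{ii} = -r_i$) shows that each row of $D(\mathbf{a})$ is orthogonal to $\mathbf{a}^T$, so $D(\mathbf{a}) \mathbf{a}^T = 0$, and since $\mathbf{a} \neq 0$ we conclude $\det D(\mathbf{a}) = 0$.

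For the reverse inequality I plan to argue by contradiction. Assuming $\operatorname{rank} D(\mathbf{a}) \leq n-2$, the right kernel of $D(\mathbf{a})$ has dimension at least $2$ and contains some $\mathbf{b}$ not proportional to $\mathbf{a}$. Using the strict positivity of $\mathbf{a}$, I would slide along the two-parameter family $t\mathbf{a} + s\mathbf{b}$ in order to extract a nonzero kernel vector $\mathbf{c} \geq 0$ having at least one coordinate equal to zero --- concretely, after replacing $\mathbf{b}$ by $-\mathbf{b}$ if necessary, set $\mathbf{c} := \lambda \mathbf{a} - \mathbf{b}$ with $\lambda = \max_i b_i/a_i$. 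Put $Z := \{i : c_i = 0\}$, a nonempty proper subset of $\{1,\ldots,n\}$. Reading the $i$-th coordinate of $D(\mathbf{a}) \mathbf{c}^T = 0$ for $i \in Z$ gives $\sum_{j \neq i} r_{ij} c_j = 0$; since every summand is non-negative this forces $r_{ij} = 0$ whenever $i \in Z$ and $j \notin Z$.

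We thus obtain a block decomposition of $D(\mathbf{a})$ in which, for each $i \in Z$, the relation (\ref{principalrelations}) involves only the $a_j$ with $j \in Z$, while dually the restriction $\mathbf{c}|_{Z^c} > 0$ lies in the kernel of the $Z^c \times Z^c$ block and satisfies $r_i c_i = \sum_{j \in Z^c \setminus \{i\}} r_{ij} c_j$ for every $i \in Z^c$. The main obstacle --- and where I expect the real work to lie --- is turning this block structure into a genuine contradiction. The plan is to exploit the minimality defining each $r_i$ together with the coprimality $\gcd(a_1,\ldots,a_n) = 1$: clearing denominators in $\mathbf{c}|_{Z^c}$ should produce an integer combination on the smaller index set $Z^c$ which, combined with the original relation (\ref{principalrelations}) for some $i \in Z^c$, yields an expression of some $r \cdot a_i$ with $r < r_i$ as a non-negative combination of $\{a_j : j \neq i\}$, contradicting the minimality of $r_i$. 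A cleaner alternative closing step would be induction on $n$ applied to the submatrix $D(\mathbf{a})_{Z \times Z}$, which is itself a principal matrix of $\langle a_j : j \in Z\rangle$ (the minimizing exponent over the full generating set necessarily coincides with the one over $Z \setminus \{i\}$ by our choice of $Z$), combined with an analogous statement for $Z^c$ to violate the supposed rank bound on $D(\mathbf{a})$.
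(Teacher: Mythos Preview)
Your upper bound on the rank matches the paper's. For the lower bound you take a genuinely different route; the paper passes through commutative algebra instead. Given any other positive integer kernel vector $\mathbf{b}$ (one reduces to this case by adding a large multiple of $\mathbf{a}$ and clearing denominators), each binomial $f_i = x_i^{r_i} - \prod_{j\neq i} x_j^{r_{ij}}$ lies in $I(\mathbf{b})$. The paper then invokes $I(\mathbf{a}) = \sqrt{(f_1,\ldots,f_n)}$: since $I(\mathbf{b})$ is prime this gives $I(\mathbf{a}) \subseteq I(\mathbf{b})$, hence equality (both are height $n-1$ primes), and then $x_i^{a_j}-x_j^{a_i} \in I(\mathbf{b})$ forces $a_ib_j=a_jb_i$, i.e.\ $\mathbf{b} \propto \mathbf{a}$.

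Your convex-combinatorial argument is correct through the block structure, but neither closing plan works as written. In Plan~A the identities $r_i c_i = \sum_{j \in Z^c\setminus\{i\}} r_{ij} c_j$ are relations among the $c_j$, not the $a_j$; there is no mechanism in sight that converts them into an expression $r\,a_i \in \langle a_j : j\neq i\rangle$ with $0<r<r_i$, so the minimality of $r_i$ is never threatened. In Plan~B, induction on the $Z\times Z$ block legitimately gives rank $|Z|-1$, but the $Z^c\times Z^c$ block is \emph{not} a principal matrix for $(a_j)_{j\in Z^c}$, since for $i\in Z^c$ the principal relation may involve indices in $Z$; so induction does not apply there. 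Even granting $\operatorname{rank} D_{Z^c} = |Z^c|-1$, the block lower-triangular shape yields only
\[
\operatorname{rank} D(\mathbf{a}) \ \ge\ (|Z|-1)+(|Z^c|-1)\ =\ n-2,
\]
and this bound is sharp here: writing $E$ for the $Z^c\times Z$ block, one has $E(\mathbf{a}|_Z)^T = -D_{Z^c}(\mathbf{a}|_{Z^c})^T \in \operatorname{im} D_{Z^c}$, so the kernel dimensions add exactly, $\dim\ker D(\mathbf{a}) = \dim\ker D_Z + \dim\ker D_{Z^c}$. No contradiction with $\operatorname{rank} D(\mathbf{a})\le n-2$ emerges from the block decomposition alone.
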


\begin{proof}
% ORIGINAL PROOF:
%Since the system $D({\bf a})X=0$ has solution $X={\bf a}^T$ by (\ref{principalrelations}), $D(\bf {a})$ has rank $\le n-1$.    If ${\bf b}^T$ is another solution, then
%the ideal defining the curve $I({\bf b})$ contains $f_i = x_i^{r_i} -\prod_{j\neq i}x_j^{r_{ij}}, 1\le i\le n$.  But $I({\bf a})= \sqrt{(f_1, \ldots, f_n)}$.
%Since both $I({\bf a})$ and $I({\bf b})$ are primes of the same height $n-1$, they must be equal which means $a_ib_j = a_jb_i$ for all $i$ and $j$.  Hence $a_i = cb_i$.
%So, rank of $D({\bf a}) = n-1$.
%
% REVISED PROOF (MORE DETAILLED):
Since the system $D({\bf a})X=0$ has solution $X={\bf a}^T$ by (\ref{principalrelations}), $D(\bf {a})$ has rank $\le n-1$.    If ${\bf b}^T$ is another solution, then the ideal defining the curve $I({\bf b})$ contains $$f_i = x_i^{r_i} -\prod_{j\neq i}x_j^{r_{ij}}$$
for all $1\le i\le n$.  But $I({\bf a})= \sqrt{(f_1, \ldots, f_n)}$ thus $I({\bf a})\subseteq I({\bf b})$ because $I({\bf b})$ is prime.
Since both $I({\bf a})$ and $I({\bf b})$ are primes of the same height $n-1$, they must be equal: $I({\bf a})= I({\bf b})$.
Since $x_i^{a_j}-x_j^{a_i}\in I({\bf a})$ for all $i$ and $j$, one has that
$a_ib_j = a_jb_i$ for all $i$ and $j$ and hence the $2\times n$ matrix whose rows are ${\bf a}$ and ${\bf b}$ is of rank 1, i.e.,
$a_i = cb_i$ for some $c$.  So, rank of $D({\bf a}) = n-1$.
\end{proof}

\begin{remark}{\rm
Observe that $D({\bf a})$ is not uniquely defined. Although the diagonal entries $-r_i$ are uniquely determined, there is not a unique choice for $r_{ij}$ in general.
We have the ``map" $D: {\mathbb N}^{[n]} \to T_n$ from the set ${\mathbb N}^{[n]}$ of sequences of $n$ relatively prime positive integers to the subset $T_n$ of $n\times n$ matrices of rank $n-1$ with negative integers on the diagonal and non negative integers outside the diagonal.
Note that we can recover $\bf {a}$ from $D(\bf {a})$ by factoring out the greatest common divisor of the $n$ maximal minors
of the $n-1\times n-1$ submatrix of $D(\bf {a})$ obtained by removing the first row. In other words, call $D^{-1}:T_n \to N^{[n]}$
the operation that, for $M\in T_n$, takes the first column of $\adj (M)$ and then factors out the g.c.d. to get an element in $N^{[n]}$. Then,
$D^{-1}(D({\bf a}))={\bf a}$ for all ${\bf a}\in N^{[n]}$.
Now given a matrix $M\in T_n$, $D(D^{-1}(M)) \neq M$ in general as the following example shows: if
$M=\left[
\begin{matrix}
-4&0&1&1\\
1&-5&4&0\\
0&4&-5&1\\
3&1&0&-2\\
\end{matrix}
\right]$ then $D^{-1}(M)=(7,11,12,16)$ and $D(D^{-1}(M) \neq M$ (it is easy to check for example that $r_2=3<5$).
}\end{remark}

\medskip

We now focus on the case of Gorenstein monomial curves in ${\mathbb A}_k^4$ so assume that $n=4$.
If ${\bf a}$ is Gorenstein but is not a complete intersection,
by the characterization in \cite[Theorems~3 and 5]{Br}, there is a principal matrix $D({\bf a})$ that has the following form:
\begin{equation}\label{embdim4GorMat}
\left[
\begin{matrix}
-c_1&0& d_{13} &d_{14}\\
d_{21}&-c_2&0&d_{24}\\
d_{31}&d_{32}& -c_3&0\\
0&d_{42}&d_{43}&-c_4\\
\end{matrix}
\right]
\end{equation}
with $c_i\ge 2$ and $d_{ij}>0$ for all $1\le i,j\le 4$, the columns summing to zero and all the columns of the adjoint being relatively prime.
The first column of the adjoint of this matrix is $-{\bf a}^T$ and Bresinsky's characterization also says that
the first column (after removing the signs) of the adjoint of a principal matrix $D({\bf a})$ of this form defines a Gorenstein curve
provided the entries of this column are relatively prime.

\medskip
The following is a slight strengthening of this criterion.

\begin{theorem}\label{criterion}
Let $A$ be a $4\times 4$ matrix of the form
$$
A=
\left[
\begin{matrix}
-c_1&0& d_{13} &d_{14}\\
d_{21}&-c_2&0&d_{24}\\
d_{31}&d_{32}& -c_3&0\\
0&d_{42}&d_{43}&-c_4\\
\end{matrix}
\right]
$$
with $c_i\ge 2$ and $d_{ij}>0$ for all $1\le i,j\le 4$, and all the columns summing to zero.  Then the  first column of the adjoint of $A$
{\rm(}after removing the signs{\rm)}
defines a monomial curve provided these entries are relatively prime.
\end{theorem}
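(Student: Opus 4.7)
The plan is to compute the first column of $\adj(A)$ directly and show that its four entries are all nonzero integers of the same sign, so that after removing the common sign one obtains a quadruple of positive integers. Under the coprimality hypothesis this quadruple lies in $\mathbb{N}^{[4]}$ and hence defines a monomial curve in $\mathbb{A}^4_k$.

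First I would record the two basic consequences of the hypotheses on $A$. Since the columns of $A$ sum to zero, $(1,1,1,1)$ is a left null vector of $A$, so $\det A = 0$; then Cramer's rule $A\cdot\adj(A)=\det(A)\,I$ shows that the first column $\mathbf{v}$ of $\adj(A)$ lies in the right kernel of $A$. The column-sum hypotheses give the four identities
$$c_1=d_{21}+d_{31},\quad c_2=d_{32}+d_{42},\quad c_3=d_{13}+d_{43},\quad c_4=d_{14}+d_{24},$$
and I would use these to simplify each cofactor $v_i=(-1)^{i+1}M_{1,i}$, where $M_{1,i}$ is the $3\times 3$ minor obtained by deleting row $1$ and column $i$.

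Next I would expand each of the four $3\times 3$ cofactors and substitute the column-sum identities at the right steps so that the mixed-sign terms collapse. The expected outcome is that all four entries of $\mathbf{v}$ are negative, with
\begin{align*}
-v_1 &= c_2c_3d_{14}+d_{13}d_{24}d_{32}, & -v_2 &= c_3c_4d_{21}+d_{24}d_{31}d_{43},\\
-v_3 &= c_1c_4d_{32}+d_{14}d_{31}d_{42}, & -v_4 &= c_1c_2d_{43}+d_{13}d_{21}d_{42}.
\end{align*}
Since $c_i\ge 2$ and $d_{ij}>0$, each $-v_i$ is a sum of products of strictly positive integers and is therefore itself a positive integer. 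Setting $\mathbf{a}:=-\mathbf{v}$, the coprimality hypothesis yields $\mathbf{a}\in\mathbb{N}^{[4]}$, so $\mathcal{C}(\mathbf{a})\subset\mathbb{A}^4_k$ is a well-defined affine monomial curve. As a byproduct, the identity $A\mathbf{a}^T=0$ means that each row of $A$ encodes a valid relation $c_ia_i=\sum_{j\neq i}d_{ij}a_j$ for the semigroup $\langle a_1,\ldots,a_4\rangle$, exactly of the shape (\ref{principalrelations}), although the $c_i$ are not a priori guaranteed to be the minimal $r_i$.

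The main obstacle is purely bookkeeping in the cofactor reduction: one has to pick the right column-sum identity to substitute at each step so that every negative summand is absorbed by a positive one. The cyclic structure of $A$ (exactly one zero per column, sitting in a rotating position) makes the four cofactor computations formally identical up to relabeling, so carrying out one of them in full and invoking the symmetry handles the remaining three. A more conceptual alternative, which I would mention but not pursue, is to recognise $-A^\top$ as a weighted Laplacian of the strongly connected directed graph whose edges are the positions of the positive off-diagonal entries of $A$; Perron--Frobenius then forces $\ker A$ to be spanned by a strictly positive vector, but the explicit monomial formulas above are preferable because they expose the dependence of $\mathbf{a}$ on the entries of $A$, which is what is actually needed in the applications developed later in the paper.
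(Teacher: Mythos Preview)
Your computation is correct and does establish the literal statement: the four cofactors in the first column of $\adj(A)$ are indeed all negative, so $\mathbf{a}=-\mathbf{v}$ is a quadruple of relatively prime positive integers and hence parametrizes an affine monomial curve in $\mathbb{A}_k^4$.

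However, the paper's proof does substantially more, and this extra content is the real point of the theorem. The sentence preceding the statement announces it as ``a slight strengthening of [Bresinsky's] criterion'', and the last line of the paper's proof reads ``Hence $\mathbf{a}=(a_1,a_2,a_3,a_4)$ is Gorenstein by Bresinsky's criterion.'' To invoke Bresinsky one must know that $A$ is a \emph{principal} matrix for $\mathbf{a}$, i.e., that each $c_i$ equals the minimal multiplier $r_i$ in (\ref{principalrelations}); you explicitly flag that your argument does not show this. The bulk of the paper's proof is precisely this minimality: given any competing relation $b_{11}a_1=\sum_{j>1}b_{1j}a_j$ with $b_{1j}\ge 0$, they produce a row vector $(1,x_2,x_3,x_4)$ in the left kernel of the matrix with the first row replaced, argue via auxiliary determinants $T_2,T_3,T_4$ that the $x_j$ are positive integers, and read off $b_{11}=x_2d_{21}+x_3d_{31}\ge d_{21}+d_{31}=c_1$. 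This is exactly what is invoked later in the proof of Theorem~\ref{main}, where the authors write ``it is principal \ldots by Theorem~\ref{criterion}'' and conclude that the translated curves are Gorenstein.

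So your proposal proves what the theorem literally says but misses the intended (and understated) conclusion that $A=D(\mathbf{a})$ and hence $\mathbf{a}$ is Gorenstein and not a complete intersection. Your explicit positivity formulas are a pleasant first step---and the paper tacitly relies on the $a_i$ being positive---but the substantive part, the minimality of the $c_i$, is still missing.
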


\begin{proof}
Consider such a matrix $A$ and let $a_1, a_2, a_3, a_4$ be the entries in the first column of the adjoint of $A$ (after
removing the signs). Since we are assuming that they are relatively prime, there exist integers $\lambda_1,\ldots,\lambda_4$ such that
$\lambda_1 a_1+\cdots+\lambda_4 a_4=1$.

\smallskip

It suffices to show that the four relations in the rows of $A$ are principal relations.
We will show this for the first row and the other rows are similar.
Suppose that $b_{11} a_1 = b_{12}a_2+b_{13}a_3+b_{14}a_4$ is a relation with $b_{11}\geq 2$ and $b_{12}, b_{13}, b_{14}\geq 0$
and let's show that $b_{11}\geq c_1$.

\smallskip

Since the system
$
\left[
\begin{matrix}
-b_{11}&b_{12}& b_{13} &b_{14}\\
d_{21}&-c_2&0&d_{24}\\
d_{31}&d_{32}& -c_3&0\\
0&d_{42}&d_{43}&-c_4\\
\end{matrix}
\right]Y = 0$ has a nontrivial solution, namely $Y = (a_1, a_2, a_3, a_4)^T$,
we see that it has determinant zero.
So there exist $x_i$ such that
\begin{equation}\label{detzero}
(1,x_2, x_3, x_4)\left[
\begin{matrix}
-b_{11}&b_{12}& b_{13} &b_{14}\\
d_{21}&-c_2&0&d_{24}\\
d_{31}&d_{32}& -c_3&0\\
0&d_{42}&d_{43}&-c_4\\
\end{matrix}
\right]= 0\,.
\end{equation}

Consider the matrix $T_4 =\left[
\begin{matrix}
-b_{11}&b_{12}& b_{13} &b_{14}\\
d_{21}&-c_2&0&d_{24}\\
d_{31}&d_{32}& -c_3&0\\
\lambda_1&\lambda_2 &\lambda_3 &\lambda_4\\
\end{matrix}
\right]$.
If the determinant of $T_4$ is $-t_4$, then the last column of its adjoint is $-t_4 (a_1, a_2, a_3, a_4)^T$.
This is because $T_4 (a_1, a_2, a_3, a_4)^T = (0,0,0,1)^T$.
Hence, looking at the element in the last row and last column of the adjoint of $T_4$, one gets using (\ref{detzero}) that
$$- t_4 a_4= \left| \begin{matrix}
-b_{11}&b_{12}& b_{13}  \\
d_{21}&-c_2&0 \\
d_{31}&d_{32}& -c_3 \\
\end{matrix}
\right| =  \left| \begin{matrix}
0&-x_4d_{42}&-x_4 d_{43}  \\
d_{21}&-c_2&0 \\
d_{31}&d_{32}& -c_3 \\
\end{matrix}
\right| = -x_4a_4\,.$$
Hence $t_4 = x_4$, and since $t_4$ is an integer, so is $x_4$.
Now, looking at the element in the last column and first row of the adjoint of $T_4$, one has
$$t_4a_1 =    \left| \begin{matrix}
 b_{12}&b_{13}& b_{14}  \\
 -c_2&0&d_{24} \\
d_{32}& -c_3&0 \\
\end{matrix}
\right| = b_{12}c_3d_{24}+ b_{13}d_{32}d_{24}+b_{14}c_2c_3>0\,.$$
So, $x_4 = t_4$ is now a positive integer.

\smallskip

Consider the matrix $T_2 = \left[
\begin{matrix}
-b_{11}&b_{12}& b_{13} &b_{14}\\
d_{31}&d_{32}&-c_3&0 \\
0&d_{42}&d_{43}& -c_4\\
\lambda_1&\lambda_2 &\lambda_3 &\lambda_4\\
\end{matrix}
\right]$
which determinant is denoted by $-t_2$. By similar calculations, we see that $x_2=t_2$ is an integer and,
focusing on the element in the last column and third row of the adjoint of $T_2$, one gets that
$$
t_2a_3 =    \left| \begin{matrix}
 -b_{11}&b_{12}& b_{14}  \\
 d_{31}&d_{32}&0 \\
 0& d_{42}&-c_4 \\
\end{matrix}
\right| = b_{11}c_4d_{32}+ b_{12}d_{31}c_{4}+b_{14}d_{31}d_{42}>0
$$
so $x_2=t_2$ is also a positive integer.

\smallskip

Similarly, using the matrix $T_3=  \left[
\begin{matrix}
-b_{11}&b_{12}& b_{13} &b_{14}\\
d_{21}&-c_2&0&d_{24}\\
0&d_{42}&d_{43}& -c_4\\
\lambda_1&\lambda_2 &\lambda_3 &\lambda_4\\
\end{matrix}
\right]$ of determinant $-t_3$, one gets that
$x_3 = -t_3$ and hence $x_3$ is an integer.
However, by calculating the entry in the last column and second row of the adjoint of $T_3$, one gets that
$$(-t_3)a_2 =  \left| \begin{matrix}
- b_{11}&b_{13}& b_{14}  \\
d_{21}&0&d_{24} \\
0& d_{43}&-c_4 \\
\end{matrix}
\right| = b_{11}d_{43}d_{24}+b_{13}d_{21}c_4+b_{14}d_{21}d_{43}>0,$$
and hence, $x_3$ is again a positive integer.

\smallskip

So, $b_{11} = x_2d_{21}+x_3d_{31}\ge d_{21}+d_{31} = c_1$ as desired.

\smallskip

Since we can make any of the $c_i$'s the first row, by rearranging the $a_i$'s suitably, this proves that all of the rows are principal relations and this is a principal matrix.  Hence ${\bf a}=(a_1, a_2, a_3, a_4)$ is Gorenstein by Bresinsky's criterion.
\end{proof}

Denote now the sequence of positive integers by ${\bf a}=(a,a+x,a+y,a+z)$ for some $x,y,z>0$. In other words,
we assume that the first integer in the sequence is the smallest but after that we do not assume any ascending order.
Recall that we have assumed, without loss of generality, that ${\rm gcd}(a,x,y,z)=1$.
The following result gives two families of Gorenstein monomial curves in ${\mathbb A}_k^4$ by translation from a given Gorenstein curve.

\begin{theorem}\label{main}
Given any Gorenstein non complete intersection monomial curve ${\mathcal C}({\bf a})$ in ${\mathbb A}_k^4$, there exist two vectors ${\bf u}$ and ${\bf v}$ in ${\mathbb N}^4$ such that for all $t\ge 0$, ${\mathcal C}({\bf a} +t{\bf u})$ and ${\mathcal C}({\bf a}+t{\bf v})$ are also Gorenstein non complete intersection monomial curves whenever the entries of the corresponding sequence {\rm(}${\bf a} +t{\bf u}$ for the first family, ${\bf a} +t{\bf v}$ for the second{\rm)} are
relatively prime.
\end{theorem}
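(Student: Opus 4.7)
The plan is to use Theorem~\ref{criterion}. Let $A$ be a principal matrix of ${\mathcal C}({\bf a})$ in the Bresinsky form (\ref{embdim4GorMat}). For each $t \geq 0$ I will produce a matrix $A_t = A + tB$ of the same form as (\ref{embdim4GorMat}) whose kernel is spanned by ${\bf a} + t{\bf u}$ (and similarly a second matrix $A + tB'$ with kernel spanned by ${\bf a} + t{\bf v}$). Since every matrix of the form (\ref{embdim4GorMat}) has rank~$3$, the first column of its adjoint is a scalar multiple of its kernel vector; Theorem~\ref{criterion} then gives that ${\mathcal C}({\bf a} + t{\bf u})$ is a Gorenstein curve whenever the entries of ${\bf a} + t{\bf u}$ are coprime, and the strict positivity of all off-diagonal entries of $A_t$ guarantees, via Bresinsky's characterization, that the curve is not a complete intersection.

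Expanding $(A + tB)({\bf a} + t{\bf u}) = 0$ as a polynomial in $t$ and using $A{\bf a}=0$ reduces the task to finding $B$ and ${\bf u}$ with $A{\bf u} + B{\bf a} = 0$ and $B{\bf u} = 0$, where $B$ must have the same zero pattern as $A$, columns summing to zero, non-positive diagonal, and non-negative off-diagonal entries (so that $A + tB$ retains the form (\ref{embdim4GorMat}) for every $t \geq 0$). I try the ansatz ${\bf u} = (u_1, u_2, 0, 0)$. The equation $B{\bf u}=0$ together with the zero-pattern and sign restrictions on $B$ forces $B$ to be supported on the positions $(1,3), (1,4), (3,3), (4,4)$, with $B_{33} = -B_{13}$ and $B_{44} = -B_{14}$. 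Writing out $A{\bf u} + B{\bf a} = 0$ now yields a linear system in $u_1, u_2, B_{13}, B_{14}$; thanks to the identities $d_{21}+d_{31}=c_1$ and $d_{32}+d_{42}=c_2$ for $A$, this system is consistent and has a one-parameter family of positive rational solutions. Clearing denominators gives
\[
{\bf u} = a_3 a_4\, (c_2,\, d_{21},\, 0,\, 0), \qquad B_{13} = a_4(c_2 d_{31} + d_{21} d_{32}), \qquad B_{14} = a_3 d_{21} d_{42},
\]
and a direct check confirms that $A + tB$ has the form (\ref{embdim4GorMat}) for all $t \geq 0$.

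The second family comes from the symmetric ansatz ${\bf v} = (0, 0, v_3, v_4)$: the same reasoning forces the perturbation $B'$ to be supported on $(1,1), (2,2), (3,1), (3,2)$, yielding
\[
{\bf v} = a_1 a_2\, (0,\, 0,\, c_4,\, d_{43}), \qquad B'_{31} = a_2(c_4 d_{13} + d_{14} d_{43}), \qquad B'_{32} = a_1 d_{24} d_{43},
\]
and Theorem~\ref{criterion} applied to $A + tB$ and $A + tB'$ concludes the proof. The conceptual heart of the argument, which I expect to be the main obstacle, is the choice of ansatz: a generic perturbation $B$ would make the kernel of $A + tB$ depend cubically on $t$, preventing a linear translation of ${\bf a}$; restricting the support of ${\bf u}$ (respectively ${\bf v}$) to two coordinates is precisely the constraint that forces $B{\bf u}=0$ and turns the problem into a solvable linear system compatible with the required positivity.
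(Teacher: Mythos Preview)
Your argument is correct: the matrix $A+tB$ that you construct does have the shape~(\ref{embdim4GorMat}) with columns summing to zero for all $t\ge 0$, and the identities $c_1=d_{21}+d_{31}$, $c_2=d_{32}+d_{42}$ (resp.\ $c_3=d_{13}+d_{43}$, $c_4=d_{14}+d_{24}$) make the linear system $A{\bf u}+B{\bf a}=0$ consistent exactly as you claim. Since any matrix of form~(\ref{embdim4GorMat}) has rank~$3$ (the $3\times 3$ minor on rows $1,2,3$ and columns $2,3,4$ equals $d_{13}d_{24}d_{32}+d_{14}c_2c_3>0$), the kernel of $A+tB$ is spanned by ${\bf a}+t{\bf u}$, and Theorem~\ref{criterion} together with Bresinsky's characterization finishes the proof.

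Your construction is, however, genuinely different from the paper's. The paper perturbs $A$ at positions $(1,1),(1,3),(3,1),(3,3)$ by the fixed matrix with entries $\pm 1$, obtaining
\[
A_t=\begin{pmatrix}-c_1-t&0&d_{13}+t&d_{14}\\ d_{21}&-c_2&0&d_{24}\\ d_{31}+t&d_{32}&-c_3-t&0\\ 0&d_{42}&d_{43}&-c_4\end{pmatrix},
\]
and takes ${\bf u}$ to be the vector of $3\times 3$ minors of the $3\times 4$ matrix formed by rows $2$ and $4$ of $A$ together with $(1,0,-1,0)$; this ${\bf u}$ has all entries strictly positive with $u_1=u_3$. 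You instead perturb at positions $(1,3),(1,4),(3,3),(4,4)$ and take ${\bf u}=a_3a_4(c_2,d_{21},0,0)$ with two zero entries. Both satisfy the key constraint $B{\bf u}=0$ needed to make the kernel of $A+tB$ depend linearly on~$t$, but they produce distinct one-parameter families of Gorenstein curves. The paper's choice has the advantage that ${\bf u}$ is expressed purely in terms of the $c_i,d_{ij}$ (not the $a_i$), and it specializes to a multiple of $(1,1,1,1)$ under the extra hypothesis of Corollary~\ref{corMultConj}, linking the result to the periodicity conjecture. Your choice is more explicit and your systematic derivation via the ansatz ${\bf u}=(u_1,u_2,0,0)$ makes transparent \emph{why} the system closes up.
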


\begin{proof}
Let $D({\bf a})$ be the principal matrix of ${\bf a}$ given in (\ref{embdim4GorMat}).
Then let ${\bf u}$ be the vector of $3 \times 3 $  minors of the $3\times 4$ matrix
with
$U = \left[
\begin {matrix}
d_{21}&-c_2&0&d_{24}\\
1&0 & -1&0\\
0&d_{42}&d_{43}&-c_4\\
\end{matrix}\right]
$
so that $U {\bf u}^T = (0,0,0)^T$.
Focusing on the second row, we see that $u_1 = u_3$.  Similarly, let ${\bf v}$ be the third adjugate of the $3\times 4$ matrix
$V = \left[
\begin{matrix}
-c_1&0& d_{13} &d_{14}\\
0&-1&0&1\\
d_{31}&d_{32}& -c_3&0\\
\end{matrix}
\right]
$ so that $V {\bf v}^T = (0,0,0)^T$.

\smallskip

We will check now that, as long as their entries are relatively primes, the sequences
${\bf a} +t{\bf u}$ and ${\bf a}+t{\bf v}$ respectively have principal matrices
$$
A_t = \left[
\begin{matrix}
-c_1-t&0& d_{13} +t&d_{14}\\
d_{21}&-c_2&0&d_{24}\\
d_{31}+t&d_{32}& -c_3-t&0\\
0&d_{42}&d_{43}&-c_4\\
\end{matrix}
\right]
\quad\hbox{and}\quad
B_t = \left[
\begin{matrix}
-c_1&0& d_{13} &d_{14}\\
d_{21}&-c_2-t&0&d_{24}+t\\
d_{31}&d_{32}& -c_3&0\\
0&d_{42}+t&d_{43}&-c_4-t\\
\end{matrix}
\right]$$
and hence define Gorenstein curves.
It is a straightforward calculation to check that
the rows of these matrices are the relations of  ${\bf a} +t{\bf u}$ and ${\bf a}+t{\bf v}$ respectively, i.e.,
$A_t\times ({\bf a} +t{\bf u})^T=(0,0,0,0)^T$ and $B_t\times ({\bf a} +t{\bf v})^T=(0,0,0,0)^T$ and it suffices to check it for $t=1$.
Consider the vector $A_1 \times ({\bf a} + {\bf u})^T$.
If we add the first row of $D({\bf a})$ to $U$ to make a square matrix $U'$ then the determinant of $U'$, expanding by its third row, is $a-(a+y)=-y$.
On the other hand, the adjoint of $U'$ has ${\bf u}$ as the first column
and ${\bf a}$ as the third column,
thereby the first row of $U'$ multiplied by ${\bf u}$ equals $-y$
and the third row of $U'$ multiplied by ${\bf a}$ also equals $-y$.
Since the first row of $A_1$ is the first row of $U'$ minus the third row of $U'$, we see that the first entry
of the vector $A_1 \times ({\bf a} + {\bf u})^T$ is zero, and a similar argument works to show that the third entry
of this vector is also zero.
Moreover, the second row of $A_1$ coincides with the second row of $D({\bf a})$ so one gets 0 multiplying by ${\bf a}$, and since it is also the
first row of $U$, one also gets 0 multiplying by ${\bf u}$ and hence the second entry in the vector $A_1\times ({\bf a}+{\bf u})^T$ is zero. The same argument
works for the fourth entry and the proof for ${\bf v}$ is similar.

\smallskip

Since the differences between the matrix $D({\bf a})$ and $A_t$ are all in the first and third rows, and $U$ comes from the second and fourth row, we have shown that if $A_1$ is a relation matrix for ${\bf a}+{\bf u}$ , $A_{0} = D({\bf a})$ is a relation matrix for ${\bf a}$.
Moreover, note that the changes in $D({\bf a})$ to get $A_t$ or $B_t$ did not alter the column sums and hence the columns still add up to zero.

\smallskip

Since $A_1$ has the form, with zeros above the diagonal, with zero in the last column first row, all the non diagonal entries non negative and has all the columns sum to zero, it is principal provided  ${\bf a}+{\bf u}$ is relatively prime by Theorem \ref{criterion}.
Thus, if the cofactors are relatively prime, they form Gorenstein curves.
\end{proof}

Note that since we have assumed that the first entry in ${\bf a}$ is the smallest, the first principal relation can not be homogeneous (w.r.t. the usual grading
on $R$). Let us see what happens when 2 of the other 3 principal relations are homogeneous.

\begin {corollary}\label{corMultConj}
Let ${\bf a} = (a, a+x, a+y, a+z)$ be Gorenstein and not a complete intersection.
Suppose that both the second and the fourth rows of the matrix $D({\bf a})$ in (\ref{embdim4GorMat}) have their entries summing to zero.
Then, $x<z<y$ and ${\bf a} + t\alpha y(1,1,1,1)$ is Gorenstein for all $t\ge 0$, where  $\alpha$ is a positive integer determined by $(a,a+x,a+y,a+z)$.
\end{corollary}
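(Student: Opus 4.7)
The idea is to apply Theorem~\ref{main} and show that, under the hypothesis, the vector ${\bf u}$ it produces is $\alpha y(1,1,1,1)$ for some positive integer $\alpha$.

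\textit{Step 1: consequences of the hypothesis.} The columns of $D({\bf a})$ always sum to zero, and by hypothesis so do rows 2 and 4. These together give $c_2=d_{21}+d_{24}$ and $c_4=d_{42}+d_{43}$. Substituting into the relations $D({\bf a}){\bf a}^T=0$ for ${\bf a}=(a,a+x,a+y,a+z)$ collapses rows 2 and 4 to
\[
d_{24}(z-x)=d_{21}\,x \qquad\text{and}\qquad d_{43}(y-z)=d_{42}(z-x).
\]
Positivity of $x$ and of all the $d_{ij}$ forces $z>x$ from the first identity and then $y>z$ from the second, proving $x<z<y$.

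\textit{Step 2: computing ${\bf u}$.} In the proof of Theorem~\ref{main}, ${\bf u}$ is the kernel vector (vector of signed $3\times 3$ minors) of the matrix $U$ whose middle row is $(1,0,-1,0)$ and whose top and bottom rows are rows~2 and~4 of $D({\bf a})$. The middle row gives $u_1=u_3$ immediately. Expanding the other two coordinates as $3\times 3$ cofactors and substituting $c_2=d_{21}+d_{24}$ and $c_4=d_{42}+d_{43}$, a short algebraic simplification shows that
\[
u_1=u_2=u_3=u_4 = d_{21}d_{42}+d_{21}d_{43}+d_{24}d_{43}
\]
(choosing the overall sign that makes ${\bf u}$ positive). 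Setting $\nu:=d_{21}/(z-x)=d_{24}/x$ and $\mu:=d_{42}/(y-z)=d_{43}/(z-x)$ from Step~1 and using the telescoping $(y-z)+(z-x)+x=y$, this common value factors as $\mu\nu(z-x)\cdot y$. Hence ${\bf u}=\alpha y(1,1,1,1)$ with $\alpha:=\mu\nu(z-x)=d_{24}d_{43}/x$, a positive integer determined by ${\bf a}$.

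\textit{Step 3: conclusion.} Theorem~\ref{main} now gives that ${\bf a}+t{\bf u}={\bf a}+t\alpha y(1,1,1,1)$ is a Gorenstein non complete intersection monomial curve for every $t\ge 0$ for which the entries are relatively prime; this is automatic as soon as $\gcd(x,y,z)=1$, since translating by a constant vector does not change the gcd of the successive differences.

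The main obstacle is the cofactor simplification in Step~2 showing that all four components of ${\bf u}$ coincide; this is a polynomial identity that becomes transparent once the two row-sum relations are substituted, but requires careful bookkeeping. A secondary subtlety is verifying that $\alpha=d_{24}d_{43}/x$ is a positive integer, which follows from combining the row-2 identity $d_{24}(z-x)=d_{21}\,x$ with the fact that ${\bf u}$ is by construction a vector of integer minors of~$U$.
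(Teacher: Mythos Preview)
Your Step~1 is fine and matches the paper. The error is in Step~2, where you conclude that $\alpha=\mu\nu(z-x)=d_{24}d_{43}/x$ is a positive integer. It is not. In the paper's own example ${\bf a}=(11,17,25,19)$ one has $x=6$, $y=14$, $z=8$, $d_{21}=1$, $d_{24}=3$, $d_{42}=3$, $d_{43}=1$, so $\nu=d_{21}/(z-x)=1/2$, $\mu=d_{43}/(z-x)=1/2$, and your $\alpha=d_{24}d_{43}/x=3/6=1/2$. Indeed the common minor value is $b=d_{21}c_4+d_{24}d_{43}=7$, while $y=14$, so ${\bf u}=7(1,1,1,1)$ is \emph{not} of the form $\alpha y(1,1,1,1)$ with integer $\alpha$. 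Your justification (``${\bf u}$ is a vector of integer minors'') only shows $\alpha y\in\mathbb Z$, not $\alpha\in\mathbb Z$.

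The paper handles this by \emph{not} claiming that $b$ itself is a multiple of $y$: it passes to the smallest positive integer $\beta$ with $\beta b=\alpha y$, and applies Theorem~\ref{main} to the subsequence ${\bf a}+(t\beta){\bf u}={\bf a}+t\alpha y(1,1,1,1)$. Your Step~3 also needs repair: you invoke $\gcd(x,y,z)=1$, which is not assumed (and fails in the example, where $\gcd(6,14,8)=2$). The point of arranging the translate to be a multiple of $y$ is precisely to make the coprimality argument go through: any common factor of the translated entries then divides both $a+t\alpha y$ and $y$, hence divides $a$, and one finishes from there.
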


\begin {proof}
Since the entries of the second row sum to 0, i.e., $c_2=d_{21}+d_{24}$, we get from $c_2(a+x)=d_{21}a+d_{24}(a+z)$ that
\begin{equation}\label{row2sumto0}
c_2x=d_{24}z,
\end{equation}
i.e., $(d_{21}+d_{24})x=d_{24}z$, and hence $x<z$.
Similarly, the sum of the entries of the fourth row being 0 implies that
\begin{equation}\label{row4sumto0}
d_{42}x+d_{43}y=c_4z,
\end{equation}
i.e., $d_{42}x+d_{43}y=(d_{42}+d_{43})z$, and hence
$d_{43}(y-z) = d_{42}(z-x)$ and one has that $y>z$.

Moreover, the hypothesis in the corollary, we compute that the vector ${\bf u}$ in the proof of the theorem~\ref{main} is ${\bf u} = b(1,1,1,1)$, where $b =  d_{21}c_4+d_{24}d_{43}$.

Now set $d:={\rm gcd}(x,z)$. Simplifying (\ref{row2sumto0}) by $d$, one gets that $c_2x/d=d_{24}z/d$ with ${\rm gcd}(x/d,z/d)=1$ so
$c_2=qz/d$ and $d_{24}=qx/d$ for some integer $q$.
Now simplifying (\ref{row4sumto0}) by $d$ also, one gets that $d$ divides $d_{43}y$ and $d_{43}y/d=c_4z/d-d_{42}x/d$
and hence $qd_{43}y/d=c_2c_4-d_{24}d_{42}=c_4d_{21}+d_{24}(c_4-d_{42})=c_4d_{21}+d_{24}d_{43}=b$.
Multiply $b$ by the smallest strictly positive integer $\beta$ so that $\beta b=  \alpha y$ for some integer $\alpha$. Note that $\beta\leq d$ and if $t=1$, then $\alpha=qd_{43}$.
Then,
${\bf a} + t\alpha y(1, 1,1,1)$ are always Gorenstein for if they have a common factor, then it will be a common factor of
$a+ t\alpha y$ and $a+y+t\alpha y$  which necessarily is common factor of both $a$ and $y$.
But then, it will be a common factor of $x$ and $z$, i.e., of all entries in ${\bf a}$ which are relatively prime.
\end{proof}

\begin{remark}{\rm
The previous result shows that for the Gorenstein curves satisfying the hypothesis in Corollary~\ref{corMultConj}, the periodicity conjecture
holds with period $\alpha y$.
}\end{remark}

\begin{example}{\rm
The sequence ${\bf a}=(11,17,25,19)$ is Gorenstein and not a complete intersection and its principal matrix,
$\displaystyle{
D({\bf a})=
\left[
\begin{matrix}
-4&0&1&1\\
1&-4&0&3\\
3&1&-2&0\\
0&3&1&-4\\
\end{matrix}
\right]
}$, satisfies the conditions in Corollary~\ref{corMultConj}.  Since $b=7$ and ${\rm gcd}(x,z) = 2$, we see that $2b$ is the smallest integral multiple of $y= 14$.  Thus, adding any positive multiple of $14$ to all the entries of ${\bf a}$ will provide
a new Gorenstein sequence which is not a complete intersection.
Observe that adding $b=7$ to all entries in ${\bf a}$ they have a common factor of $2$ and it does not
result in a Gorenstein sequence.
}\end{example}

\begin{remark}{\rm
When two rows of the principal matrix different from the second and the fourth ones both have entries summing up to zero, one does not expect to see
Gorenstein curves that are not complete intersections if we take $(a+t, a+x+t, a+y+t, a+z+t)$ for $t$ large. For example,
the sequence ${\bf a}=(43,67,49,83)$ is Gorenstein and not a complete intersection and its principal matrix,
$\displaystyle{
D({\bf a})=
\left[
\begin{matrix}
-5&0&1&2\\
2&-5&0&3\\
3&1&-4&0\\
0&4&3&-5\\
\end{matrix}
\right]
}$, satisfies that both its second and third rows have entries summing up to zero.
Adding $t(1,1,1,1)$ to ${\bf a}$ will provide a Gorenstein sequence which is not a complete intersection for $t=15$, $49$ and $83$ but
does not seem to result in a Gorenstein sequence which is not a complete intersection for larger values of $t$.
}\end{remark}

We will end this note giving a precise description of a minimal graded free resolution of $S({\bf a})$ as an $R$-module
when ${\bf a}$ is Gorenstein and not a complete intersection. Assume that ${\bf a}=(a,a+x,a+y,a+z)$ is Gorenstein but not a complete intersection
and let $D({\bf a})$ be the principal matrix of ${\bf a}$ given in (\ref{embdim4GorMat}).
Since all Gorenstein grade 3 ideals in $k[x_1,\ldots,x_n]$ must be the ideal of $n$ order pfaffians of an $(n+1)\times (n+1)$ skew symmetric matrix,
so must the ideal $I({\bf a})$.
The ideal $I ({\bf a})$ described in \cite{Br} is indeed the ideal of minors of the skew symmetric matrix
$$
\phi({\bf a}) = \left[
\begin{matrix}
0&0& x_2^{d_{32}} &x_3^{d_{43}}&x_4^{d_{24}}\\
0&0&x_1^{d_{21}}&x_4^{d_{14}}&x_2^{d_{42}}\\
-x_2^{d_{32}}&-x_1^{d_{21}}&0&0&x_3^{d_{13}}\\
-x_3^{d_{43}}&-x_4^{d_{14}}&0&0&x_1^{d_{31}}\\
-x_4^{d_{24}}&-x_2^{d_{42}}&-x_3^{d_{13}}&-x_1^{d_{31}}&0\\
\end{matrix}
\right]\,.
$$
The graded resolution of $S({\bf a})$ is
$$
0 \rightarrow R(-(ac_1+(a+z)c_4+(a+x)d_{32})) \stackrel{\delta_3}{\rightarrow} R^5 \stackrel{\phi}{\rightarrow} R^{5}\stackrel{\delta_1} \rightarrow R \rightarrow S({\bf a}) \rightarrow 0
$$
where $\phi = \phi ({\bf a})$ and $\delta _1= (\delta _3) ^T =\delta({\bf a})$ for
$$\delta({\bf a}) =
[x_1^{c_1}-x_3^{d_{13}}x_4^{d_{14}},
x_3^{c_3}-x_1^{d_{31}}x_2^{d_{32}},
x_4^{c_4}-x_2^{d_{42}}x_3^{d_{43}},
x_2^{c_2}-x_1^{d_{21}}x_4^{d_{24}}, x_1^{d_{21}}x_3^{d_{43}}-x_2^{d_{32}}x_4^{d_{14}}]\,.$$
Observe that the socle degree is ${\bf a} . [c_1, d_{32}, 0, c_4]-3$ where $.$ is the dot product of the vectors.

\medskip

\begin{remark}{\rm
By following the proof of the theorem \ref {main}, we see that when
one translates ${\bf a}$ through ${\bf u}$, we get that
$$
\phi ({\bf a}+t {\bf u})=   \left[
\begin{matrix}
0&0& x_2^{d_{32}} &x_3^{d_{43}}&x_4^{d_{24}}\\
0&0&x_1^{d_{21}}&x_4^{d_{14}}&x_2^{d_{42}}\\
-x_2^{d_{32}}&-x_1^{d_{21}}&0&0&x_3^{d_{13}+t}\\
-x_3^{d_{43}}&-x_4^{d_{14}}&0&0&x_1^{d_{31}+t}\\
-x_4^{d_{24}}&-x_2^{d_{42}}&-x_3^{d_{13}+t}&-x_1^{d_{31}+t}&0\\
\end{matrix}
\right]
$$
and the socle degree is increased by
$t^2u_1+t(u_1c_1+u_2d_{32}+u_4c_4+a)$ to get
$ t^2u_1 +t a + [{\bf a}+ t {\bf u}]. [c_1, d_{32}, 0, c_4]-3$.
If one translates through $\bf {v}$, we get
$$
\phi ({\bf a}+t{\bf v})= \left[
\begin{matrix}
0&0& x_2^{d_{32}} &x_3^{d_{43}}&x_4^{d_{24}+t}\\
0&0&x_1^{d_{21}}&x_4^{d_{14}}&x_2^{d_{42}+t}\\
-x_2^{d_{32}}&-x_1^{d_{21}}&0&0&x_3^{d_{13}}\\
-x_3^{d_{43}}&-x_4^{d_{14}}&0&0&x_1^{d_{31}}\\
-x_4^{d_{24}+t}&-x_2^{d_{42}+t}&-x_3^{d_{13}}&-x_1^{d_{31}}&0\\
\end{matrix}
\right]
$$ and the socle degree increases by
$t^2v_4+t(v_1c_1+v_2d_{32}+v_4c_4+a+z)$
to get  $ t^2v_4 +t (a+z) + [{\bf a}+ t{\bf v}]. [c_1, d_{32}, 0, c_4]-3$.
}
\end{remark}

\end{document}